\def\ind{{\rm 1\hspace{-0.90ex}1}}
\def\EE{\mathbb{E}}
\def\PP{\mathbb{P}}
\def\NN{\mathbb{N}}
\def\RR{\mathbb{R}}
\def\ind{{\rm 1\hspace{-0.90ex}1}}
\def\=d{\stackrel{d}{=}}
\newtheorem{theorem}{Theorem}
\newtheorem{lemma}[theorem]{Lemma}
\newtheorem{corollary}[theorem]{Corollary}
\newtheorem{condition}{Condition}
\begin{document}

\title{On Bootstrap Percolation in Living Neural Networks}
\author{ Hamed Amini \\ \small{\'Ecole Normale Sup\'{e}rieure - INRIA Rocquencourt, Paris, France}\\
\small{\it hamed.amini@ens.fr}}
\date{}
\maketitle

\begin{abstract}
 Recent experimental studies of living neural networks \cite{Eck07,brsomotl06} reveal that their global activation induced by electrical stimulation can be explained using the concept of bootstrap percolation on a directed random network. The experiment consists in activating externally an initial random fraction of the neurons and observe the process of firing until its equilibrium. The final portion of neurons that are active depends in a non linear way on the initial fraction. The main result of this paper is a theorem which enables us to find the asymptotic of final proportion of the fired neurons in the case of random directed graphs with given node degrees as the model for interacting network. This gives a rigorous mathematical proof of a phenomena observed by physicists in neural networks~\cite{Coh09}.
\end{abstract}

\section{Introduction}
Understanding the structure and  dynamics of neural networks is a challenge for biologists, physicists and mathematicians.
Recent experimental studies of living neural networks~\cite{Eck07,brsomotl06} reveal that their global activation induced by electrical stimulation can be explained using the concept of bootstrap percolation on a directed random network. The experiment consists in activating externally an initial random fraction of the neurons and observe the process of firing until its equilibrium. The final portion of neurons that are active depends in a non linear way on the initial fraction. The main result shown by experiments is that there exists a non-zero critical value for the fraction of initially (i.e., externally) excited neurons beyond which the global activity jumps to a almost complete activation of the network, while below this critical value the firing essentially does not spread. The main result of this paper is a theorem which enables us to find the asymptotic of final proportion of the fired neurons in the case of random directed graphs with given node degrees as the model for interacting network. This gives a rigorous mathematical proof of a phenomena observed by physicists in neural networks~\cite{Coh09}. Cohen et al. in ~\cite{Coh09} find this asymptotic via mean-field assumption and they compare it to simulations and experiment.  The validity of the random graph approximation to metric graphs such as the experimental neural networks is discussed in \cite{Tlusty09}. Bootstrap percolation model has been used in several related applications (see for example \cite{amdrle, goltsev-2006, watts, schwab-2008}). For a review, we refer the reader to \cite{AdL03}.

A neural network is a group of interconnected neurons functioning as a circuit. The neural network is modeled as a directed graph ~\cite{brsomotl06} whose nodes are neurons connected by synapses. The total number of neurons is $n$. Let $G =(V,E)$ be a directed graph on the vertex set $V=[1,...,n]$. We denote $i \rightarrow j$ if there is a directed link from $i$ to $j$. Each node has two degrees, an in-degree, the number of links that point into the node, and an out-degree, which is the number pointing out. We denote by $d_i^{+}$ the in-degree of node $i$ and $d_i^{-}$ its out-degree. Let $A$ denote the adjacency matrix of graph G, with $A_{ij}=1$ if $j \rightarrow i$ and $0$ otherwise. 
Once a neurons fired, it stays on forever. At the beginning of the process, a neuron has a probability $\alpha$ to fire as a direct response to the externally applied electrical stimulus and will be on at time $t+ 1$ if at time $t$ it was on, or if at least $\Omega$ of its incoming nodes were on at time $t$.

We denote by $X_i(t)$ the state of the neuron $i$ at time $t$. It is on if $X_i(t)=1$ and off if $X_i(t)=0$. Then at each time step $t+1$, each node $i$ applies:
\begin{eqnarray}
X_i(t+1)=X_i(t)+(1-X_i(t))\ind\left(\sum_j A_{ij}X_j(t) \geq \Omega \right),
\end{eqnarray}
where $\ind\left(\Xi \right)$ denotes the indicator of an event $\Xi$; this is $1$ if $\Xi$ holds and $0$ otherwise.
The dynamics is monotonic from the definition, since a firing neuron can never turn off and therefore $X_i(t+1) \geq X_i(t)$.
Let us define $\Phi_{n}(\alpha,t)$ as $$\Phi_{n}(\alpha,t):= n^{-1}\sum_{j=1}^n \EE[X_j(t)] .$$ We are interested to find the asymptotic value of $$\Phi_n(\alpha) :=  \lim_{t \rightarrow \infty} \Phi_{n}(\alpha,t),$$ when $n \rightarrow \infty$,
in the case of random directed graphs with arbitrary degree distribution (see for example Molloy and Reed ~\cite{molloyreed95, Molloy98thesize}, Janson \cite{janson08}, Newman, Strogatz and Watts\cite{newstrowat} and Cooper and Frieze\cite{coopfri04}) as the underlying model for the interacting network. Let us define $P(j,k)$ to be the probability that a randomly chosen vertex has in-degree $j$ and out-degree $k$. We remark that in general this joint distribution of $j$ and $k$ is not equal to the product $p_j p_k$ of the separate distributions of in- and out-degree. Since every edge on a directed graph must leave some vertex and enter another, $P(j,k)$ must satisfy
$\sum_{j,k}(j-k)P(j,k) = 0 .$ The next section describes this model of random digraphs.


\subsection{Notation and definitions}
We are interested in constructing a random directed graph on $n$ vertices. Let $\mbox{\textbf{d}}^+_n=\{(d_{n,i}^{+})^n_{i=1}\}$ and $\mbox{\textbf{d}}^-_n=\{(d_{n,i}^{-})^n_{i=1}\}$ be sequences of non-negative integers such that $\sum_{i=1}^n d_{n,i}^{+} = \sum_{i=1}^n d_{n,i}^{-}$. The configuration model (CM) on $n$ vertices with degree sequences $\mbox{\textbf{d}}^+_n$ and $\mbox{\textbf{d}}^-_n$ is constructed as follows:

 A vertex $i$ is represented by the set of its incoming and outgoing edges that we denote by $W_i^+$, and $W_i^-$ respectively with $|W_i^+|=d_i^+$, $|W_i^-|=d_i^-$. Let $W^+ = \bigcup_i W_i^+$ and $W^- = \bigcup_i W_i^-$.
A configuration is a matching of $W^+$ with $W^-$ and we choose the configuration at random, uniformly over all possible configurations.  We denote the resulted graph by $CM(n,\mbox{\textbf{d}}^+_n,\mbox{\textbf{d}}^-_n)$. Observe that the self-loops may occur, these become rare as $n \rightarrow \infty$ (see e.g. \cite{bollobas}, \cite{janson06} or \cite{coopfri04} for more precise results in this direction).

We will let $n\rightarrow \infty$, and assume that for each $n$, given $\mbox{\textbf{d}}^+_n$ and $\mbox{\textbf{d}}^-_n$ satisfying the following regularity conditions:

\begin{condition}
\label{cond}
For each $n \in \NN$, $\mbox{\textbf{d}}^+_n=\{(d_{n,i}^{+})^n_{i=1}\}$ and $\mbox{\textbf{d}}^-_n=\{(d_{n,i}^{-})^n_{i=1}\}$ are sequences of nonnegative integers such that $\sum_{i=1}^n d_{n,i}^{+} = \sum_{i=1}^n d_{n,i}^{-}$, and, for some probability distribution $P(j,k)$ independent of $n$,
\begin{enumerate}
\item $\# \{i: d_{n,i}^+=j, d_{n,i}^-=k\}/n \rightarrow P(j,k)$ as $n \rightarrow \infty$ (the degree density condition: the density of vertices of in-degree $j$ and out-degree $k$ tends to $P(j,k)$);
\item $\sum_ {j,k}j P(j,k) = \sum_ {j,k}k P(j,k)=:\lambda \in (0,\infty)$ (finite expectation property);
\item $\sum_{i=1}^n d_{n,i}^+/n = \sum_{i=1}^n d_{n,i}^-/n \rightarrow \lambda$ as $n \rightarrow \infty$ (the average degree tends to a given value $\lambda$).
\end{enumerate}
\end{condition}

We consider the asymptotic case when $n \rightarrow \infty$ and say that an event holds w.h.p. (with
high probability) if it holds with probability tending to 1 as $n \rightarrow \infty$. We shall use $\stackrel{p}{\rightarrow}$ for convergence in probability as $n \rightarrow \infty$. Similarly, we use $o_p$ and $O_p$ in a standard way. for example, if $(X_n)$ is a sequence of random variables, then $X_n = O_p(1)$ means that "$X_n$ is bounded in probability" and $X_n=o_p(n)$ means that $X_n/n \stackrel{p}{\rightarrow} 0$.

\subsection{Statement of result}
In this section, we state the main theorem of this work.
Let $D_{in}$ and $D_{out}$ be random variables with the distribution $\mathbb{P}(D_{in} = j,D_{out}=k) = P(j,k)$.
We define the function $f_{\alpha}(y)$ as follows
\begin{eqnarray*}
f_{\alpha}(y)&:=&\lambda y - (1-\alpha) \mathbb{E}\left[ D_{out} \ind\left(Bin(D_{in},1-y) < \Omega\right)\right] .
\end{eqnarray*}
Let $y^*=y^*_{\alpha}$ be the largest solution to $f_{\alpha}(y) = 0$ in $[0,1]$, i.e.,
$$y^* = \max \{y\in[0,1]|f_{\alpha}(y) = 0\} .$$ Remark that such $y^*$ exists because $f_{\alpha}(0) \leq 0$, $f_{\alpha}(1) = \lambda \alpha > 0$ and $f_{\alpha}$ is continuous.
The main result of this paper is the following theorem.

\begin{theorem}
\label{thm-main}
Consider the random graph $CM(n,\mbox{\textbf{d}}^+_n,\mbox{\textbf{d}}^-_n)$ satisfying Condition \ref{cond}. Then we have
\begin{enumerate}
\item If $y^*=0$, i.e., if $f_{\alpha,\theta}(y) > 0$ for all $y \in (0,1]$, then w.h.p. $$\Phi^{(n)}(\alpha) = 1 - o_p(1) .$$
\item If $y^*>0$ and furthermore $y^*$ is not a local minimum point of $f_{\alpha}(y)$, then w.h.p.
$$\Phi_n(\alpha) = 1 - (1-\alpha)\mathbb{E}\left[\ind\left(Bin(D_{in},1-y^*)<\Omega \right)\right] + o(1).$$
\end{enumerate}
\end{theorem}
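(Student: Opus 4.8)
The plan is to analyze the bootstrap percolation dynamics by a vertex-exposure / edge-revealing argument coupled with a differential-equation (fluid-limit) analysis, in the spirit of Janson's treatment of bootstrap percolation on $G(n,d)$ and of Amini--Fountoulakis type arguments for configuration models. Here is the structure I would follow.

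\medskip
\textbf{Step 1: Reduction to a final-state characterization.}
Since the dynamics $X_i(t+1)\ge X_i(t)$ is monotone, the process stabilizes and the final state is the smallest fixed point of the update map above the initial set $S_0$ of externally activated vertices (each vertex independently in $S_0$ with probability $\alpha$). A vertex $i\notin S_0$ is active in the final configuration iff at least $\Omega$ of its in-neighbours are finally active. I would first set up the equivalent characterization: the set of \emph{inactive} vertices is the largest set $U$ such that every $i\in U\setminus S_0$ has fewer than $\Omega$ in-neighbours outside $U$. This is the directed analogue of the ``core'' found by a peeling process, and it is what makes an exploration algorithm natural.

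\medskip
\textbf{Step 2: An exploration (peeling) process on the configuration model.}
I would run the following algorithm on $CM(n,\mathbf{d}^+_n,\mathbf{d}^-_n)$, revealing the matching of half-edges on the fly. Start with all vertices ``undetermined.'' Repeatedly pick an out-half-edge of a vertex already declared active (starting from $S_0$), reveal its partner in-half-edge, and mark that in-half-edge of the receiving vertex as ``active-incoming.'' A vertex becomes active as soon as it is in $S_0$, or it has accumulated $\Omega$ active-incoming half-edges. The key point of the configuration model is that, conditioned on the history, the partner of the next out-half-edge is uniform among the remaining in-half-edges. Tracking the number of active out-half-edges not yet processed, and the number of vertices with exactly $\ell$ active-incoming half-edges for $\ell<\Omega$, gives a Markov chain whose one-step expected increments are explicit functions of these counts divided by $n$. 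Crucially, when an out-half-edge lands on a given vertex of in-degree $j$, the vertex already has a Binomially-distributed number of active-incoming edges among its $j$ in-slots, because the previously matched in-half-edges are a uniform random subset; this is exactly where $\mathrm{Bin}(D_{in}, 1-y)$ enters.

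\medskip
\textbf{Step 3: Fluid limit and identification of $f_\alpha$.}
Using the differential-equation method (Wormald), I would show that the rescaled count of ``active half-edges still to be matched'' concentrates around $n$ times the solution of an ODE whose relevant first integral, after a time change parametrized by the fraction $y$ of in-half-edges already matched to active vertices, is precisely $f_\alpha(y)=\lambda y-(1-\alpha)\,\EE[D_{out}\,\ind(\mathrm{Bin}(D_{in},1-y)<\Omega)]$: the term $\lambda y$ counts in-half-edges consumed, and the subtracted expectation is the number of out-half-edges that have been ``released'' by newly activated vertices (a vertex of type $(j,k)$ not initially active releases its $k$ out-half-edges once its Binomial count reaches $\Omega$, i.e.\ with the complementary probability of the indicator). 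The exploration of active half-edges does not die out as long as this first integral stays positive; it halts when it first hits zero. Hence the terminal value of $y$ is the relevant root of $f_\alpha$.

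\medskip
\textbf{Step 4: Choosing the root and concluding.}
In case (1), $f_\alpha>0$ on $(0,1]$, so the exploration consumes essentially all in-half-edges before stopping, forcing all but $o_p(n)$ vertices to become active, giving $\Phi_n(\alpha)=1-o_p(1)$. In case (2), the exploration stops the first time $f_\alpha$ reaches $0$; the hypothesis that $y^*$ (the \emph{largest} root) is not a local minimum is what guarantees that the process actually reaches a neighbourhood of $y^*$ with $f_\alpha$ strictly crossing from positive to (and through) zero there, rather than getting stuck at a smaller root where $f_\alpha$ only touches $0$ from above; a continuity/monotonicity argument on the drift (and the fact that $f_\alpha(1)=\lambda\alpha>0$ means $y^*<1$ only when there is a genuine crossing) pins the terminal value at $y=y^*$. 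Then a vertex not in $S_0$ is finally \emph{inactive} iff its Binomial number of active in-edges is $<\Omega$, so the final active fraction is $1-(1-\alpha)\EE[\ind(\mathrm{Bin}(D_{in},1-y^*)<\Omega)]$. Converting the w.h.p.\ statement about the number of active vertices into one about $\Phi_n(\alpha)=n^{-1}\sum_j\EE[X_j(\infty)]$ is routine by bounded convergence since $X_j(\infty)\in\{0,1\}$.

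\medskip
\textbf{Main obstacle.}
The technically delicate point is Step 4: showing that the peeling/exploration stops exactly at the \emph{largest} root $y^*$ and not at a spurious earlier root, and handling the exclusion of the case where $y^*$ is a local minimum of $f_\alpha$ (where the behaviour is genuinely discontinuous and not covered). This requires a careful analysis of the stopping time of the fluid-limit ODE — in particular ruling out that finite-size fluctuations push the process across a tangential zero — and is where the bulk of the rigorous work lies; the rest (establishing the configuration-model exploration is well-approximated by its fluid limit, and the Binomial structure of partially-exposed vertices) is standard but must be done with care because the digraph has two coupled degree sequences.
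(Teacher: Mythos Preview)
Your approach is essentially the paper's own: the same one-edge-at-a-time exploration on the configuration model (choose an out-half-edge of a fired vertex, match it uniformly to a remaining in-half-edge, fire the receiver once it has accumulated $\Omega$ such edges), the same application of Wormald's differential-equation method to the resulting Markov chain, and the same identification of $f_\alpha(y)$ with the rescaled number of active out-half-edges still to be processed.

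One clarification on your Step~4 and the ``main obstacle'' you flag: in the natural parametrization (and the paper's), $y=1-\tau/\lambda$ is the fraction of in-half-edges \emph{not yet} matched, so the exploration runs from $y=1$ (where $f_\alpha(1)=\lambda\alpha>0$) \emph{downward}. The first zero encountered is therefore automatically the \emph{largest} root $y^*$; there is no danger of getting stuck at a smaller root, and no root-selection issue to resolve. The actual role of the ``$y^*$ is not a local minimum'' hypothesis is that it forces $f_\alpha<0$ on some interval $(y^*-\epsilon,y^*)$, so the fluid limit crosses zero transversally; this is what allows one to infer from the $o_p(n)$ approximation that the discrete process $F_{out}(t)$ must actually hit $0$ at some $T_f\sim n\lambda(1-y^*)$, rather than merely grazing zero and continuing. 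So the delicate point is the transversal crossing (handled in the paper by passing to a larger domain $D'(\epsilon)$ with the positivity constraint dropped and invoking the corollary to Wormald's theorem), not the selection among roots.
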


\subsection{Simulation}
Following \cite{brsomotl06, Coh09, Sor08}, we assume a gaussian distribution
for in-degree $\PP(D_{in}=k) \sim exp \left(\frac{-(k-\overline{k})^2}{2\sigma^2} \right)$ with $\overline{k}=50$ and
$\sigma=15$ based on the experimental results \cite{brsomotl06, Sor08}.
By Theorem \ref{thm-main}, one could see that when $D_{in}$ and $D_{out}$ are independent, $\Phi(\alpha) := \lim_{n \to \infty} \Phi_n (\alpha)$ will depend
only on the distribution of in-degree $D_{in}$. Figure \ref{figNN} shows the three dimensional
representation of the final fraction of fired neurons, i.e. $\Phi(\alpha)$,  as a function of ${\alpha}$ and $\Omega$. For comparison of this with experiment, we refer to Cohen et al.~\cite{Coh09} (who showed that the model and experiment fit very well when using the gaussian distribution of the connections).

\begin{figure}[ht]
\centering
\includegraphics[height=7.4cm]{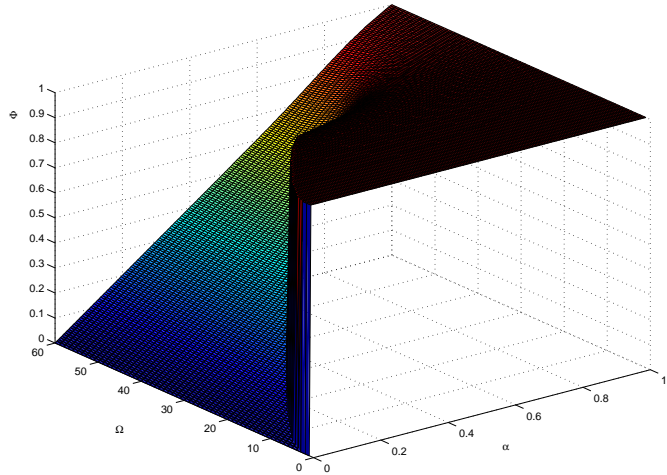}
\caption{
The final fraction of fired neurons as a function of ${\alpha}$ and $\Omega$. Here $\PP(D_{in}=k) \sim exp \left(\frac{-(k-\overline{k})^2}{2\sigma^2} \right)$ with $\overline{k}=50$ and
$\sigma=15$.}
\label{figNN}
\end{figure}
\subsection{Organization of the paper}
Bootstrap Percolation is studied in detail in the next section. We describe the dynamics of bootstrap percolation as a Markov chain in Section \ref{sec-MC}. The proof of out main theorem,  Theorem \ref{thm-main}, is based on the use of differential equations for solving discrete random processes. This was first introduced by Wormald~\cite{Worm95}. We briefly discuss his method in Section~\ref{sec-eqdiff}. The proof of our main result is given in Section~\ref{sec-proof}.



\section{Preliminaires}
\subsection{The Markov chain}\label{sec-MC}
The aim of this section is to describe the dynamics of bootstrap percolation as a Markov chain, which is perfectly tailored for asymptotic study. We consider the bootstrap percolation on $CM(n,\mbox{\textbf{d}}^+_n,\mbox{\textbf{d}}^-_n)$. Let $m(n):=\sum_{i=1}^n d^+_{n,i}$ denote the number of out-going edges in the graph. Our analysis below consists in an extension of the results of~\cite{balpit07}.

At a given time step $t$ neurons are partitioned into fired $\mathbb{F}(t)$ and non-fired $\mathbb{N}(t)$. We further partition the class of non-fired nodes according to their in and out degree $\mathbb{N}(t) = \bigcup_{j, k}\mathbb{N}^{j, k}(t)$. At time zero, $\mathbb{F}(0)$ contains the initial set of fired neurons.
We look at the system in discrete time. At time step $t+1$ we have
 $$ \mathbb{F}(t+1) = \mathbb{F}(t) \bigcup \left\{v \in \mathbb{N}(t) \ \ \mbox{such that} \ \ | \mathbb{F}(t) \bigcap \{w\in V, A_{vw}=1\} | \geq \Omega \right\} .$$

 We use a different approach based on bilateral interactions. This allows for a simpler Markov chain description of the system. At each step we have one interaction only between two neurons, yielding at least one fired.
Our process is as follows
 \begin{itemize}
 \item Choose an out-going edge of a fired neuron $i$.
 \item Identify its partner $j$ (i.e. by construction of the random graph in the configuration model, the partner is given by choosing an in-going edge randomly among all available in-going edges)
 \item Delete both edges. If $j$ is currently non-fired and it is the $\Omega$-th deleted in-going edge from $j$, then $j$ fires.
 \end{itemize}

 By definition, an interaction means coupling an out-going edge with an in-going edge.
 Our system is described in terms of
 \begin{itemize}
 \item $N_i^{j, k}(t)$, $0 \leq i < \Omega$, the number of non-fired neurons with in-degree $j$, out-degree $k$, and $i$ in-going edges from fired neurons at time $t$,
 \item $F^{j, k}(t)$ : the number of fired neurons with in-degree $j$ and out-degree $k$ at time $t$,
 \item $F(t)$: the number of fired neurons at time $t$,
 \item $N_{in}(t)$ : the number of  in-going edges belonging to non-fired neurons at time $t$,
 \item $F_{in}(t)$ : the number of  in-going edges belonging to fired neurons at $t$,
 \item $F_{out}(t)$ : the number of out-going edges belonging to fired neurons at $t$.
\end{itemize}
Obviously for $j>i$ we have $N_i^{j, k}(t)=0$. Because at each step we delete $1$ in-going edge and the number of in-going edges at time $0$ is $m(n)$, the number of existing in-going edges at time $t$ will be $m(n)-t$ and we have $$F_{in}(t)+N_{in}(t) = m(n) -t .$$
It is easy to see that the following identities hold:
\begin{eqnarray}
N_{in}(t) &=& \sum_{j,k}\sum_{i<\Omega} (j - i) N_i^{j, k}(t) , \\
F_{out}(t) &=& \sum_{k,j} k F^{j, k}(t) - t ,\\
F(t) &=& \sum_{k,j} F^{j, k}(t) .
\end{eqnarray}
The process will finish at the stopping time $T_f$ which is the first time $t\in \NN$ where $F_{out}(t)=0$. The final number of fired neurons will be $F(T_f)$.
By definition of our process
$\left(N_i^{j, k}, F^{j, k} \right)_{i, j, k} $ represents a Markov chain. We write the transition probabilities of the Markov chain.

There are three possibilities for the $B$, the partner of an out-going edge of a fired neuron $A$.
\begin{enumerate}
\item $B$ is fired, the next state is
\begin{eqnarray*}
N_i^{j, k}(t+1) &=& N_i^{j, k}(t) , \ \ (0 \leq i < \Omega) ,\\
F^{j, k}(t+1) &=& F^{j, k}(t).
\end{eqnarray*}

\item $B$ is non-fired of in-degree $j$, out-degree $k$ and this is the $(i+1)$-th deleted in-coming edge and $i+1 < \Omega$. The probability of this event is $\frac{(j - i) N_i^{j, k} }{m(n)-t}$. The next state is
\begin{eqnarray*}
N_i^{j, k}(t+1) &=& N_i^{j, k}(t) - 1 ,\\
N_{i+1}^{j, k}(t+1) &=& N_{i+1}^{j, k}(t) + 1 ,\\
N_{i'}^{j, k}(t+1) &=& N_{i'}^{j, k}(t) , \ \ (0 \leq i' < \Omega, \ i' \neq i,i+1) , \\
F^{j, k}(t+1) &=& F^{j, k}(t).
\end{eqnarray*}

\item $B$ is non-fired of in-degree $j$, out-degree $k$ and this is the $\Omega$-th deleted in-coming edge. Then $j \geq \Omega$ and with probability $\frac{(j - \Omega + 1) N_{\Omega-1}^{j, k}(t) }{m(n) -t}$, we have
\begin{eqnarray*}
N_i^{j, k}(t+1) &=& N_i^{j, k}(t) , \ \ (0 \leq i<\Omega-1) ,\\
N_{\Omega-1}^{j, k}(t+1) &=& N_{\Omega-1}^{j, k}(t) - 1 , \\
F^{j, k}(t+1) &=& F^{j, k}(t) + 1.
\end{eqnarray*}
\end{enumerate}

Let $P_t$ denote the pairing generated by time $t$, i.e., $P_t=\{e_{out},e_{in}\}$ be the set of edges picked before time $t$. We obtain the following equations for expectation of $\left( N_i^{j, k}(t+1) , F^{j, k}(t+1)\right)$ conditioned on $P_t$ by averaging over the possible transitions:

\begin{eqnarray*}
\EE\left[N_0^{j, k}(t+1) - N_0^{j, k}(t) | P_t \right] &=& - \frac{j N_0^{j, k}(t)}{m(n) -t} ,\\
\EE\left[N_i^{j, k}(t+1) - N_i^{j, k}(t) | P_t \right] &=&  \frac{(j-i+1) N_{i-1}^{j, k}(t)-(j-i) N_i^{j, k}(t)}{m(n)-t},  (0<i<\Omega),\\
\EE\left[F^{j, k}(t+1) - F^{j, k}(t) | P_t \right] &=&  \frac{(j-\Omega+1) N_{\Omega-1}^{j, k}(t)}{m(n)-t} .
\end{eqnarray*}

\subsection{Wormald's Theorem}\label{sec-eqdiff}
In this section we briefly present a method introduced by Wormald in~\cite{Worm95} for the analysis of a discrete random process by using
differential equations. In particular we recall a general purpose theorem for the use of this method.
This method has been used to analyze several kinds of algorithms on random graphs and random regular graphs (e.g., ~\cite{CainWorm05}, \cite{Worm99} and \cite{Molloy98thesize}).

Recall that a function $f(u_1, . . . , u_j)$ satisfies a Lipschitz condition on $D \in \RR^j$ if a constant $L > 0$ exists with the property that
$$|f(u_1, . . . , u_j)-f(v_1, . . . , v_j)| \leq L \max _{1\leq i \leq j}|u_i-v_i|$$ for all $(u_1, ..., u_j)$ and $(v_1, ...., v_j)$ in $D$. For variables $Y_1, ..., Y_b$ and for $D \in \RR^{b+1}$,
the stopping time $T_D(Y_1, ..., Y_b)$ is defined to be the minimum $t$ such that $(t/n; Y_1(t)/n, ..., Y_b(t)/n) \notin D$. This is written as $T_D$ when $Y_1, ..., Y_b$ are understood from the context.

The following theorem is a reformulation of Theorem 5.1 of~\cite{Worm99}, modified and extended for the case of an infinite number of variables. In it, "uniformly" refers to the convergence implicit in the $o()$ terms. Hypothesis $(1)$ ensures that $Y_t$ does not change too quickly throughout the process. Hypothesis $(2)$ tells us what we expect for the rate of change to be, and property $(3)$ ensures that this rate does not change too quickly.

\begin{theorem}[Wormald \cite{Worm99}]
\label{thm-eqdif1}
Let $b=b(n)$ be given ($b$ is the number of variables). For $1 \leq l \leq b$, suppose $Y_l(t)$ is a sequence of real-valued random variables, such that $0 \leq Y_l(t) \leq C n$ for some constant $C$, and $H_t$ be the history of the sequence, i.e. the sequence $\left\{Y_j(k), \ 0 \leq j \leq b, \ 0 \leq k \leq t\right\}$.

Suppose also that for some bounded connected open set $D = D(n) \subseteq \RR^{b+1}$ containing the intersection of $\{(t,z_1,...,z_b): t\geq 0\}$ with some neighborhood of
$$\{(0,z_1,...,z_b):\PP(Y_l(0)=z_l n, 1\leq l\leq b) \neq 0 \mbox{ for some n}\},$$
the following three conditions are verified:
\begin{enumerate}
  \item {\rm(Boundedness).} For some function $\beta=\beta(n) \geq 1$ and for all $t<T_D$
  $$\max_{1 \leq l \leq b} |Y_l(t+1)-Y_l(t)|\leq \beta $$ 
  \item {\rm(Trend).} For some function $\lambda=\lambda_1(n) = o(1)$ and for all $l \leq b$ and $t<T_D$
  $$|\EE[Y_l(t+1)-Y_l(t)|H_{t}]-f_l(t/n,Y_1(t)/n,...,Y_l(t)/n)| \leq \lambda_1 .$$
  \item {\rm(Lipschitz).}  For each $l$ the function $f_l$ is continuous and satisfies a Lipschitz condition on $D$ with all Lipschitz constants uniformly bounded.
\end{enumerate}
Then the following holds
\begin{description}
  \item[(a)] For $(0,\hat{z}_1,...,\hat{z}_b) \in D$, the system of differential equations
  $$\frac{dz_l}{ds}=f_l(s,z_1,...,z_l), \ \ l=1,...,b ,$$ has a unique solution in $D$, $z_l:\RR \rightarrow \RR$ for $l=1,\dots,b$, which passes through $z_l(0)=\hat{z}_l,$ $l=1,\dots,b,$, and which extends to points arbitrarily close to the boundary of $D$.
  \item[(b)] Let $\lambda>\lambda_1$ with $\lambda=o(1)$.
  For a sufficiently large constant C, with probability $1-O\left(\frac{b\beta}{\lambda} \exp \left( - \frac{n\lambda^3}{\beta^3}\right)\right)$, we have
 $$Y_l(t)=n z_l(t/n) + O(\lambda n)$$ uniformly for $0 \leq t \leq \sigma n$ and for each $l$. Here $z_l(t)$ is the solution in (a) with $\hat{z}_l = Y_l(0)/n$, and $\sigma = \sigma(n)$ is the supremum of those $s$ to which the solution can be extended  before reaching within $l^{\infty}$-distance $C\lambda$ of the boundary of $D$.
\end{description}
\end{theorem}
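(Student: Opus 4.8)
The plan is to prove the two parts by separate classical arguments. Part (a) is an instance of the Picard–Lindelöf (Cauchy–Lipschitz) theorem: writing $z=(z_l)_{l\le b}$ as a curve in $\RR^b$ (or, when $b=\infty$, in the Banach space of bounded sequences under the sup-norm), hypothesis (3) makes the vector field $z\mapsto (f_l(s,z))_l$ Lipschitz in the sup-norm uniformly in $s$, with a common constant $L$; the contraction-mapping argument for the integral operator $(\cT z)(s)=\hat z+\int_0^s f(u,z(u))\,du$ then yields a unique local solution through any $(0,\hat z)\in D$ — the uniform bound $0\le z_l\le C$ keeps the Picard iterates in a fixed bounded set, so nothing degrades as $b\to\infty$ — and the standard continuation argument extends the maximal solution until it approaches $\partial D$.

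For part (b) I would use the short-window supermartingale comparison. Fix a window length $w=w(n)$ (to be chosen) and partition $\{0,\ldots,\lfloor\sigma n\rfloor\}$ into $m\approx\sigma n/w$ blocks. On a block $[t_0,t_0+w)$ write $Y_l(t_0+w)-Y_l(t_0)=M_l+\sum_{t=t_0}^{t_0+w-1}\EE[Y_l(t+1)-Y_l(t)\mid H_t]$, where $M_l$ is the centred sum. As a function of its right endpoint $M_l$ is a martingale started at $0$ with increments bounded by $2\beta$ (hypothesis (1)), so Azuma–Hoeffding gives $\PP(|M_l|>c_0\beta\sqrt{w\log(1/\delta)})\le\delta$. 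In the remaining sum, hypothesis (2) replaces each conditional expectation by $f_l(t/n,Y_1(t)/n,\ldots)$ up to an additive $\lambda_1$, and then hypothesis (1) (each coordinate moves by at most $w\beta$, and $t/n$ by at most $w/n$, over the block) together with the uniform Lipschitz bound replaces $f_l(t/n,Y(t)/n)$ by $f_l(t_0/n,Y(t_0)/n)$ up to $O(Lw\beta/n+w/n)$. Thus, off an event of probability $\le\delta$,
$$Y_l(t_0+w)-Y_l(t_0)=w\,f_l\big(t_0/n,Y_1(t_0)/n,\ldots\big)+E,\qquad E=O\big(w\lambda_1+Lw^2\beta/n+\beta\sqrt{w\log(1/\delta)}\big).$$

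Now set $\Delta(t):=\sup_{l\le b}|Y_l(t)-n\,z_l(t/n)|$. Because $z$ solves the ODE and is Lipschitz with constant $\sup_D\|f\|$, $n z_l((t_0+w)/n)-n z_l(t_0/n)=w f_l(t_0/n,z(t_0/n))+O(Lw^2\sup_D\|f\|/n)$; subtracting and using that $f_l$ is $L$-Lipschitz in its spatial arguments gives $\Delta(t_0+w)\le(1+Lw/n)\Delta(t_0)+E'$ with $E'$ of the same order as $E$. Iterating over the $m$ blocks, using $\Delta(0)=0$ (since $\hat z_l=Y_l(0)/n$) and $(1+Lw/n)^m\le e^{\sigma L}$, discrete Gronwall gives, on the intersection of the $bm$ good events and after adding $O(w\beta)$ to cover $t$ strictly between block endpoints (again via hypothesis (1)),
$$\sup_{0\le t\le\sigma n}\Delta(t)=O\big(\sigma w\beta+\sigma n\lambda_1+\sigma n\beta\sqrt{\log(1/\delta)/w}\big).$$

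Finally one balances the terms. Choosing $w\asymp\lambda n/\beta$ makes the first term $O(\lambda n)$; the second is $O(\lambda n)$ automatically since $\lambda_1<\lambda$; the third is $O(\lambda n)$ exactly when $\log(1/\delta)=O(n\lambda^3/\beta^3)$, so set $\delta=\exp(-c\,n\lambda^3/\beta^3)$. With this $w$ there are $m\asymp\sigma\beta/\lambda$ blocks, so the union bound over $l\le b$ and over blocks fails with probability $O\big(\tfrac{b\beta}{\lambda}\exp(-c\,n\lambda^3/\beta^3)\big)$, as claimed. The crux — and the expected main obstacle — is precisely this simultaneous calibration: the deterministic slack $O(\sigma w\beta+\sigma n\lambda_1)$ pushes $w$ down, the martingale fluctuation $O(\sigma n\beta\sqrt{\log(1/\delta)/w})$ pushes $w$ up, and the union-bound cost grows like $bm\asymp b\sigma\beta/\lambda$, so a single pair $(w,\delta)$ must be made to yield both the $O(\lambda n)$ error and the stated probability. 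The extension to an unbounded number of variables then needs nothing beyond the uniformity already built into hypothesis (3) and into the sup-norm formulation of part (a).
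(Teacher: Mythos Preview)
The paper does not give a proof of this theorem at all: it is quoted as a black box from Wormald~\cite{Worm99} (``a reformulation of Theorem~5.1 of~\cite{Worm99}, modified and extended for the case of an infinite number of variables''), and only the \emph{corollary} following it receives a short argument in the paper. So there is no proof in the paper to compare against.

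That said, your sketch is essentially the standard proof of Wormald's theorem as it appears in~\cite{Worm99}: Picard--Lindel\"of for part~(a), and for part~(b) a partition of time into windows of length $w$, Azuma--Hoeffding to control the martingale part of the increment on each window, the Lipschitz hypothesis to linearise the drift, and a discrete Gronwall iteration to propagate the resulting one-step error across all windows. Your balancing of $w\asymp\lambda n/\beta$ and $\delta=\exp(-c\,n\lambda^3/\beta^3)$ is exactly the calibration that produces the stated error $O(\lambda n)$ and failure probability $O\!\left(\frac{b\beta}{\lambda}\exp(-n\lambda^3/\beta^3)\right)$. One point that deserves a little more care than your last sentence suggests is the passage to $b=b(n)\to\infty$: the union bound over $l\le b$ is already accounted for in the probability estimate, but the Gronwall step implicitly uses that $\sup_l\sup_D|f_l|$ is bounded uniformly in $n$ (so that the ODE solution is uniformly Lipschitz in $s$), and this does not follow from hypothesis~(3) alone --- in Wormald's setting it comes from the boundedness of $D$ together with continuity, which for growing $b$ requires the extra uniformity the paper alludes to but does not spell out.
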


We will also use the following corollary of the above theorem, which is namely Theorem
6.1 of \cite{Worm99}. This theorem states that, as long as condition $3$ holds in $D$, the
solution of the system of equations above can be extended beyond the boundary of $\hat{D}$,
into $D$.

\begin{corollary}
\label{cor-eqdif2}
For any set $\hat{D}=\hat{D}(n) \subseteq \RR ^{b+1}$, let $T_{\hat{D}}=T_{\hat{D}(n)}(Y_1,...,Y_b)$ be the minimum $t$ such that
$(\frac tn, \frac{Y_1(t)}n,\dots,\frac{Y_b(t)}n) \notin \hat{D}$ (the stopping time). Assume in addition
that the first two hypotheses of Theorem~\ref{thm-eqdif1} are verified but only within the restricted range $t < T_{\hat{D}}$ of
$t$. Then the conclusions of the theorem hold as before, after replacing $0 \leq t \leq \sigma n$ by  $0 \leq t \leq \min\{\sigma n,T_{\hat{D}}\}$.
\end{corollary}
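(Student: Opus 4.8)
The plan is to derive the corollary from Theorem~\ref{thm-eqdif1} applied to an auxiliary process that coincides with $(Y_1,\dots,Y_b)$ up to time $T_{\hat D}$ but is, by fiat, forced to satisfy all three hypotheses of that theorem on the \emph{full} domain $D$ for every $t<T_D$. One then simply restricts the conclusion back to times $t\le T_{\hat D}$.

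\textbf{Extending the vector field.} For each $l$, condition $(3)$ gives a Lipschitz constant $L$ (uniform in $l$) for $f_l$ on $D$; since $D$ is bounded and $f_l$ is continuous on it, $f_l$ is bounded there, say $|f_l|\le M$, and replacing $\beta$ by $\beta\vee M$ (of the same order, as $M$ does not grow) we may assume $M\le\beta$. Let $\bar f_l\colon\RR^{b+1}\to\RR$ be the McShane extension $\bar f_l(x)=\sup_{y\in D}\big(f_l(y)-L\|x-y\|_\infty\big)$: it is $L$-Lipschitz on $\RR^{b+1}$ and agrees with $f_l$ on $D$. Post-composing with the $1$-Lipschitz truncation onto $[-\beta,\beta]$ leaves it unchanged on $D$; call the result $\bar f_l$ again.

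\textbf{The auxiliary process.} Couple $(\hat Y_l)_{l\le b}$ with $(Y_l)_{l\le b}$ so that $\hat Y_l(t)=Y_l(t)$ for all $t\le T_{\hat D}$, while for $t\ge T_{\hat D}$ the increments are deterministic given the history, $\hat Y_l(t+1)=\hat Y_l(t)+\bar f_l\big(t/n,\hat Y_1(t)/n,\dots,\hat Y_l(t)/n\big)$, each coordinate being additionally truncated to $[0,(C+1)n]$. Because $D$ lies in $[0,\infty)\times[0,C]^b$ (the range of the scaled process), this truncation activates only once the scaled state has already left $D$; hence it affects neither the limiting ODE on $D$ nor the exit parameter $\sigma$, and for $t\le T_{\hat D}$ it never activates, so the two processes genuinely agree there. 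Now check, against the \emph{original} domain $D$: \emph{(1)} increments of $(\hat Y_l)$ are bounded by $\beta$ (by hypothesis for $t<T_{\hat D}$, by $|\bar f_l|\le\beta$ afterwards); \emph{(2)} the trend bound holds with the original $\lambda_1$ — for $t<T_{\hat D}$ it is inherited from $(Y_l)$ since the coupling adds no information to the history, while for $T_{\hat D}\le t<T_D$ the scaled state is in $D$, where $\bar f_l=f_l$ and the conditional mean of the increment equals $f_l$ exactly; \emph{(3)} holds for $\bar f_l$ by construction, and since $\bar f_l=f_l$ on $D$ the limiting equations, their solution $z_l$, and $\sigma$ are precisely those attached to $(Y_l)$. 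Applying Theorem~\ref{thm-eqdif1} to $(\hat Y_l)$ then yields, with probability $1-O\big(\tfrac{b\beta}{\lambda}\exp(-n\lambda^3/\beta^3)\big)$, that $\hat Y_l(t)=n z_l(t/n)+O(\lambda n)$ uniformly for $0\le t\le\sigma n$ and every $l$.

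\textbf{Restriction and remarks.} On that event, for every $t\le\min\{\sigma n,T_{\hat D}\}$ one has $\hat Y_l(t)=Y_l(t)$, hence $Y_l(t)=n z_l(t/n)+O(\lambda n)$, which is exactly the asserted conclusion. The one delicate point is the extension: $\bar f_l$ must be kept both Lipschitz and unchanged on $D$, so that neither the limiting equations nor $\sigma$ is perturbed; merely freezing $\hat Y$ after $T_{\hat D}$ would be simpler but would violate the trend condition at interior points of $D$ sitting on $\partial\hat D$, and re-running Theorem~\ref{thm-eqdif1} with the smaller domain $\hat D$ would replace $\sigma$ by a different quantity. (Alternatively, one can inspect the Azuma-type concentration argument underlying Theorem~\ref{thm-eqdif1}: it invokes the boundedness and trend hypotheses only up to the first instant at which one of them fails, so substituting $T_{\hat D}$ for $T_D$ throughout changes nothing up to $\min\{\sigma n,T_{\hat D}\}$.)
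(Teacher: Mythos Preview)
Your approach is essentially the same as the paper's: define an auxiliary process $\hat Y$ that coincides with $Y$ up to $T_{\hat D}$ and thereafter evolves deterministically along the drift $f_l$, verify the full hypotheses of Theorem~\ref{thm-eqdif1} for $\hat Y$, and then restrict the conclusion to $t\le\min\{\sigma n,T_{\hat D}\}$. The paper's proof is a two-line version of exactly this construction; your McShane extension and truncation are additional technical care (ensuring $f_l$ is globally defined, Lipschitz, and bounded) that the paper leaves implicit.
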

\begin{proof}
For $1 \leq j \leq b$ , define random variables $\hat{Y}_j$ by
$$\hat{Y}_j(t+1) = \left\{
    \begin{array}{ll}
        Y_j(t+1) & \mbox{if} \ t < T_{\hat{D}} \\
        Y_j(t) + f_j(t/n,Y_1(t)/n,...,Y_j(t)/n) & \mbox{otherwise}
    \end{array}
\right.$$ for all $t \geq 0$. Then the $\hat{Y}_j$ satisfy the hypotheses of Theorem \ref{thm-eqdif1}, and so the corollary follows as $\hat{Y}_j(t)=Y_j(t)$ for $0 \leq t < T_{\hat{D}}$.
\end{proof}

\section{Proof of Theorem \ref{thm-main}}\label{sec-proof}
The proof of Theorem \ref{thm-main} is mainly based on Theorem \ref{thm-eqdif1}. Indeed we will apply this theorem to show that the trajectory of $N_i^{j, k}(t)$ and $F^{j, k}(t)$ throughout the
algorithm is a.a.s. close to the solution of the deterministic differential equations suggested
by these equations.

Define $$b(n):=\sum_{1 \leq j,k \leq n} jk (\Omega +1) .$$ For simplicity the dependence on $n$ is dropped from the notations.
For $\epsilon > 0$, we define the domains $D(\epsilon)$ as
\begin{align*}
D(\epsilon)=\{\left(\tau, \{n_i^{j,k}\}_{i< \Omega, j,k} , \{f^{j,k}\}_{j,k}  \right) \in \RR^{b(n)+1} \ : \ & \ -\epsilon < n_i^{j,k} < \lambda , -\epsilon < \tau < \lambda - \epsilon , \\ \ & \ -\epsilon < f^{j,k} < \lambda , \ \ \sum_{j,k} k f^{j,k} - \tau > 0 \:\:\:\:\:  \}.
\end{align*}

\noindent Let $T_D$ be the stopping time for $D$ which is the first time $t$ when $$\left(t/n,\{N_i^{j, k}(t/n)\},\{F^{j, k}(t/n)\}\right) \notin D .$$

\noindent Let {\rm DE} be the following system of differential equations:
\begin{eqnarray*}
(n_0^{j, k})'(\tau) &=& - \frac{j n_0^{j, k}(\tau)}{\lambda - \tau} ,\\
(n_i^{j, k})'(\tau) &=& \frac{(j-i+1) n_{i-1}^{j, k}(\tau)-(j-i) n_{i}^{j, k}(\tau)}{\lambda-\tau} , \\
(f^{j, k})'(\tau) &=& \frac{(j-\Omega+1) n_{\Omega-1}^{j, k}(\tau)}{\lambda - \tau} .
\end{eqnarray*}
with initial conditions
\begin{align*}
&n_0^{j, k} = (1-\alpha) P(j,k) , \ \ n_i^{j, k}(0) = 0 \:\: \textrm{ for } 0<i<\Omega, & f^{j, k}(0) = \alpha P(j,k) .
\end{align*}
We have

\begin{lemma} \label{lem-DE}
\begin{enumerate}
\item The system {\rm DE} has a unique solution in $D(\epsilon)$ which extends to points arbitrarily close to the boundary of $D(\epsilon)$.
\item For a sufficiently large constant $C$, with high probability we have
\begin{eqnarray}
\label{eq:diffmethod}
N_i^{j, k}(t)/n &=&  n_i^{j, k}(t/n)+o(1), \\
F^{j, k}(t)/n &=& f^{j, k}(t/n)+o(1),
\end{eqnarray}
uniformly for all $t \leq n\sigma$. Here
$\sigma=\sigma(n)$ is the supremum of those $\tau$ for which the solution of these differential
equations can be extended before reaching within $l^{\infty}$-distance $Cn^{-1/4}$ of the boundary of $D(\epsilon)$.
\end{enumerate}
\end{lemma}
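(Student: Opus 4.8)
The plan is to obtain part~(1) by explicitly integrating the system DE and part~(2) by applying Wormald's theorem (Theorem~\ref{thm-eqdif1}) together with Corollary~\ref{cor-eqdif2} to the Markov chain of Section~\ref{sec-MC}. Throughout, order the variables within each $(j,k)$--block as $n_0^{j,k},n_1^{j,k},\dots,n_{\Omega-1}^{j,k},f^{j,k}$, so that the right-hand side of the equation for a variable depends only on that variable and the preceding one in its block.

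For part~(1), the key observation is that DE decouples over the pairs $(j,k)$: for each fixed $(j,k)$ the unknowns $n_0^{j,k},\dots,n_{\Omega-1}^{j,k},f^{j,k}$ satisfy a triangular linear system whose coefficients $\frac{j-i}{\lambda-\tau}$ are real-analytic on $\{\tau<\lambda\}$, hence on a neighbourhood of the $\tau$--range of $D(\epsilon)$, where $\lambda-\tau>\epsilon$. Picard--Lindel\"of gives existence and uniqueness, and solving successively for $n_0^{j,k},n_1^{j,k},\dots$ yields the closed form
\[
n_i^{j,k}(\tau)=(1-\alpha)P(j,k)\binom{j}{i}\Big(\frac{\tau}{\lambda}\Big)^{i}\Big(\frac{\lambda-\tau}{\lambda}\Big)^{j-i}=(1-\alpha)P(j,k)\,\PP\!\left(Bin(j,\tau/\lambda)=i\right),
\]
together with $f^{j,k}(\tau)=\alpha P(j,k)+\int_0^\tau\frac{(j-\Omega+1)\,n_{\Omega-1}^{j,k}(s)}{\lambda-s}\,ds$. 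One checks directly that all coordinates stay nonnegative, that $f^{j,k}$ is nondecreasing, and that the $n_i^{j,k}$ and $f^{j,k}$ remain bounded by $P(j,k)$ (up to a harmless widening of the upper bound in $D(\epsilon)$), while $\frac{d}{d\tau}\big(\sum_{j,k}kf^{j,k}(\tau)-\tau\big)=\sum_{j,k}\frac{k(j-\Omega+1)\,n_{\Omega-1}^{j,k}(\tau)}{\lambda-\tau}-1$; hence the trajectory continues until $\tau$ reaches $\lambda-\epsilon$ or until the hyperplane $\sum_{j,k}kf^{j,k}=\tau$ is met, i.e. it extends to points arbitrarily close to $\partial D(\epsilon)$, which is the assertion of part~(1).

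For part~(2) we take the variables $Y_l$ to be the family $\{N_i^{j,k}(t)\}_{i<\Omega,\,j,k}\cup\{F^{j,k}(t)\}_{j,k}$ (there are $b=b(n)$ of them), the $f_l$ to be the right-hand sides of DE, and verify the three hypotheses of Theorem~\ref{thm-eqdif1} on $\hat D=D(\epsilon)$, then invoke Corollary~\ref{cor-eqdif2}. \textbf{Boundedness} holds with $\beta=1$: a single step deletes one in-half-edge and one out-half-edge and thus changes at most two of the $Y_l$, each by one. \textbf{Trend}: since the transition probabilities depend on the configuration only through the current counts, the conditional-expectation identities of Section~\ref{sec-MC} also hold conditionally on the history $H_t$; the difference between $\EE[Y_l(t+1)-Y_l(t)\mid H_t]$ and $f_l(t/n,Y(t)/n)$ is the relevant numerator (e.g. $(j-i+1)N_{i-1}^{j,k}(t)-(j-i)N_i^{j,k}(t)$) times $\frac{1}{m(n)-t}-\frac{1}{\lambda n-t}=\frac{\lambda n-m(n)}{(m(n)-t)(\lambda n-t)}$; in $D(\epsilon)$ both denominators exceed $\tfrac{\epsilon}{2}n$ while the numerator is $O(n)$ because $j\,N_i^{j,k}(t)\le m(n)$, so Condition~\ref{cond}(3) makes this $o(1)$ uniformly in $l$ and $t$, giving $\lambda_1=o(1)$. \textbf{Lipschitz}: since $\lambda-\tau>\epsilon$ on $D(\epsilon)$, each $f_l$ is smooth there and involves at most $\Omega+1$ coordinates, so it satisfies a Lipschitz condition; the delicate point is that the Lipschitz constant of the $n_i^{j,k}$--equation is of order $j/\epsilon$, which is \emph{not} bounded uniformly over all $(j,k)$.

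This last point is the main obstacle. The plan is to prove the Lemma first under the extra assumption $\max_i(d_{n,i}^+\vee d_{n,i}^-)\le\Delta$ for a fixed $\Delta$: then $b\le(\Omega+1)\Delta^2$, all Lipschitz constants are $\le 2\Delta/\epsilon$, the trend bound above is uniform, and taking $\beta=1$, $\lambda_1<\lambda\asymp n^{-1/4}$ makes the error probability $O\!\big(\tfrac{b\beta}{\lambda}\exp(-n\lambda^3/\beta^3)\big)=O\!\big(n^{1/4}e^{-cn^{1/4}}\big)=o(1)$, so Corollary~\ref{cor-eqdif2} yields \eqref{eq:diffmethod} for $0\le t\le n\sigma$ with $\sigma$ as in the statement; Condition~\ref{cond}(1) together with continuous dependence of the DE solution on its initial data lets us replace $Y_l(0)/n$ by the prescribed limits $(1-\alpha)P(j,k)$, $0$ and $\alpha P(j,k)$. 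To remove the bounded-degree assumption one sends $\Delta\to\infty$: by Condition~\ref{cond}(2) the number of half-edges incident to vertices of degree exceeding $\Delta$ is at most $(\varepsilon_\Delta+o(1))n$ with $\varepsilon_\Delta:=\sum_{j\vee k>\Delta}(j+k)P(j,k)\to0$, so (using that adding edges can only enlarge the final fired set) the bootstrap processes on $CM(n,\mathbf{d}^+_n,\mathbf{d}^-_n)$ and on its degree-$\le\Delta$ truncation can be coupled to differ on only $O(\varepsilon_\Delta n)+o(n)$ vertices, while both the empirical curves and the solution of DE depend continuously on the truncation level; a routine limiting argument then gives \eqref{eq:diffmethod} in full generality. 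Making this coupling precise is where the bulk of the remaining technical work lies.
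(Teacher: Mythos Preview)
Your strategy is the same as the paper's: verify the three hypotheses of Theorem~\ref{thm-eqdif1} on $D(\epsilon)$ with $\beta=1$ and $\lambda\asymp n^{-1/4}$, and read off both conclusions. Two minor organisational differences: the paper obtains part~(1) directly from conclusion~(a) of Theorem~\ref{thm-eqdif1} (existence/uniqueness from the Lipschitz hypothesis) rather than by explicit integration, deferring the closed form to Lemma~\ref{lem-sol}; and the paper does not use Corollary~\ref{cor-eqdif2} here (it is only needed later, in the proof of Theorem~\ref{thm-main}), so your appeal to it is unnecessary since you check the hypotheses on all of $D(\epsilon)$.

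The substantive difference is that you correctly flag the failure of \emph{uniform} Lipschitz bounds---the constant for the $n_i^{j,k}$ equation is of order $j/\epsilon$, unbounded in $j$---and you propose a degree-truncation and coupling argument to repair it. The paper's proof does not address this at all: it simply asserts ``the third condition (Lipschitz Hypothesis) is satisfied since $\lambda-\tau$ is bounded away from zero'' and moves on, also claiming $\lambda_1=O(1/n)$ where your more careful computation gives only $o(1)$. So on this point you are being more scrupulous than the paper itself. Your truncation plan is the standard fix in this setting and is sound in outline; just note that the monotone coupling you sketch controls the \emph{total} fired set, whereas~\eqref{eq:diffmethod} is coordinatewise, so you would rather argue that for $j,k\le\Delta$ the truncated and full chains share the same transitions except on the $o(n)+O(\varepsilon_\Delta n)$ steps that touch a high-degree half-edge, and bound the discrepancy in each $N_i^{j,k},F^{j,k}$ by that count.
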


\begin{proof}
We will use Theorem~\ref{thm-eqdif1}. The domain $D(\epsilon)$ is a bounded open set which contains all initial values of variables
which may happen with positive probability. Each variable is bounded by a constant times $n$.
By the definition of our process, the Boundedness Hypothesis is satisfied with $\beta(n) = 1$.
Trend Hypothesis is satisfied by some $\lambda_1(n)=O(1/n)$. Finally the third condition (Lipschitz Hypothesis) of the theorem is also satisfied since $\lambda-\tau$ is bounded away from zero. Then we set $\lambda = O(n^{-1/4}) > \lambda_1$. The conclusion of Theorem \ref{thm-eqdif1} now gives
\begin{eqnarray*}
N_i^{j, k}(t)/n &=&  n_i^{j, k}(t/n)+O(n^{3/4}), \\
F^{j, k}(t)/n &=& f^{j, k}(t/n)+O(n^{3/4}),
\end{eqnarray*}
with probability $1 - O(n^{7/4} \exp (-n^{1/4}))$ uniformly for all $t \leq n\sigma$.
Finally, for $0<i<\Omega$; we have $N_i^{j, k}(0)/n = 0$, and by Condition \ref{cond} and by definition:
\begin{align*}
N_0^{j, k}(0)/n \stackrel{p}{\rightarrow} (1-\alpha) P(j,k) , \  \: & F^{j, k}(0)/n \stackrel{p}{\rightarrow} \alpha  P(j,k) .
\end{align*}
This completes the proof.
\end{proof}

To analyze $\sigma$, we need to determine which constraint is violated when the solution
reaches the boundary of $D(\epsilon)$. It cannot be the first constraint, because (\ref{eq:diffmethod}) must give asymptotically feasible values of $N_i^{j,k}$ and $F^{j,k}$ up until the boundary is approached. It remains to
determine which of the last two constraints is violated when $\tau=\sigma$.

We first solve the system of differential equations (DE) and then we analyze the point to which the resulting equations are valid.
\begin{lemma}\label{lem-sol}
The solution of the system of differential equations (DE) is
\begin{eqnarray*}
 n_i^{j, k}(\tau) &=& P(j,k) (1-\alpha) {j\choose i} y^{j-i} (1-y)^i , \\
 f^{j, k}(\tau) &=& P(j,k) \left[ \alpha + (1-\alpha) \PP(Bin(j,1-y)\geq \Omega) \right],
\end{eqnarray*}
where $y=(1-\tau/\lambda)$.
\end{lemma}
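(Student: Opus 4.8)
The plan is to reduce DE to a chain of elementary first-order linear ODEs by the change of variable $y = 1 - \tau/\lambda$, which replaces the singular coefficient $\lambda - \tau$ by $\lambda y$ and moves the initial time $\tau = 0$ to $y = 1$. Regarding each unknown as a function of $y$ and using $\frac{d}{d\tau} = -\frac1\lambda\frac{d}{dy}$, the system DE becomes
\begin{align*}
\frac{d n_0^{j,k}}{dy} &= \frac{j}{y}\, n_0^{j,k}, \\
\frac{d n_i^{j,k}}{dy} &= \frac{j-i}{y}\, n_i^{j,k} - \frac{j-i+1}{y}\, n_{i-1}^{j,k} \qquad (0 < i < \Omega), \\
\frac{d f^{j,k}}{dy} &= -\frac{j-\Omega+1}{y}\, n_{\Omega-1}^{j,k},
\end{align*}
with $n_0^{j,k}(1) = (1-\alpha) P(j,k)$, $n_i^{j,k}(1) = 0$ for $0 < i < \Omega$, and $f^{j,k}(1) = \alpha P(j,k)$. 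Since Lemma~\ref{lem-DE}(1) already guarantees that the solution of DE in $D(\epsilon)$ is unique, it would in fact suffice to exhibit the claimed functions and verify that they satisfy this system with the stated initial data; but I would instead integrate the equations one at a time, since this also explains the binomial weights.

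First, the $n_0^{j,k}$ equation is separable with general solution $A y^j$, and the initial condition gives $A = (1-\alpha)P(j,k)$; this is the asserted formula for $i = 0$. Then one proceeds by induction on $i = 1, \dots, \Omega - 1$: the equation for $n_i^{j,k}$ is linear with integrating factor $y^{-(j-i)}$, and inserting the inductive expression for $n_{i-1}^{j,k}$ together with the binomial identity $(j-i+1)\binom{j}{i-1} = i\binom{j}{i}$ gives $\big(y^{-(j-i)} n_i^{j,k}\big)' = -(1-\alpha)P(j,k)\, i\binom{j}{i}(1-y)^{i-1}$. Integrating and using $n_i^{j,k}(1)=0$, which annihilates the homogeneous term $C y^{j-i}$ because $(1-y)^i$ vanishes at $y=1$, yields $n_i^{j,k}(y) = (1-\alpha)P(j,k)\binom{j}{i} y^{j-i}(1-y)^i$, closing the induction. (The degenerate cases $i=j$, where the integrating factor is trivial, and $i > j$, where $\binom{j}{i}=0$ so $n_i^{j,k}\equiv 0$, are handled automatically and are consistent with a vertex never having more marked in-edges than its in-degree.)

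Finally, I would substitute $n_{\Omega-1}^{j,k}$ into the $f^{j,k}$ equation and use $(j-\Omega+1)\binom{j}{\Omega-1} = \Omega\binom{j}{\Omega}$ to obtain $\frac{d f^{j,k}}{dy} = -(1-\alpha)P(j,k)\,\Omega\binom{j}{\Omega} y^{j-\Omega}(1-y)^{\Omega-1}$. The key point is that this right-hand side equals $(1-\alpha)P(j,k)$ times $\frac{d}{dy}\PP\!\left(Bin(j,1-y)\geq\Omega\right)$: writing $\PP(Bin(j,p)\geq\Omega) = \sum_{l\geq\Omega}\binom{j}{l}p^l(1-p)^{j-l}$, differentiating term by term, and telescoping (using $l\binom{j}{l} = (j-l+1)\binom{j}{l-1}$) leaves only the single term $\Omega\binom{j}{\Omega}p^{\Omega-1}(1-p)^{j-\Omega}$, i.e. $\frac{d}{dp}\PP(Bin(j,p)\geq\Omega) = \Omega\binom{j}{\Omega}p^{\Omega-1}(1-p)^{j-\Omega}$; one then sets $p = 1-y$. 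Integrating and imposing $f^{j,k}(1) = \alpha P(j,k)$ with $\PP(Bin(j,0)\geq\Omega) = 0$ (for $\Omega\geq1$) gives $f^{j,k}(y) = P(j,k)\big[\alpha + (1-\alpha)\PP(Bin(j,1-y)\geq\Omega)\big]$, as claimed. There is no substantial obstacle in the argument; the only step requiring a little care is the derivative identity for the binomial tail — equivalently, verifying that the differentiated sum telescopes — which is precisely where the passage from the $n_i^{j,k}$'s to the closed form for $f^{j,k}$ is made.
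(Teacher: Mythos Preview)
Your argument is correct and follows essentially the same route as the paper: change variables to remove the singular factor $\lambda-\tau$, solve the $n_i^{j,k}$ equations by induction using an integrating factor, and then integrate for $f^{j,k}$ via the derivative identity for the binomial tail. The only cosmetic difference is that the paper first substitutes $u=-\ln(\lambda-\tau)$ and afterwards converts to $y=e^{-(u-u(0))}=1-\tau/\lambda$, whereas you pass to $y$ directly; since the answer is stated in $y$, your choice is slightly more economical, but the two computations are equivalent line by line.
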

\begin{proof}
Let $u = u(\tau)=- ln (\lambda - \tau)$. Then $u(0) = - ln (\lambda) $, $u$ is strictly monotone and so is the inverse function $\tau=\tau(u)$. 
We write the system of differential equations (DE) with respect to $u$:
\begin{eqnarray*}
(n_0^{j, k})'(u) &=& -j n_0^{j, k}(u),\\
(n_i^{j, k})'(u)&=& (j-i+1)n_{i-1}^{j, k}(u) - (j-i)n_i^{j, k}(u).
\end{eqnarray*}
Then using
$$\frac{d}{du} (n_i^{j, k}(u)e^{(j-i-1)(u-u(0))}) = e^{(j-i-1)(u-u(0))} (j-i) n_i^{j, k}(u) ,$$ and by induction, we find
$$n_i^{j, k}(u) = e^{-(j-i)(u - u(0))}\sum_{r=0}^i {{j-r}\choose{i-r}}\left( 1 - e^{-(u-u(0))} \right)^{i-r} n_i^{j, k}(u(0)).$$
By going back to $\tau$, we have
$$n_i^{j, k}(\tau) = y^{d-j} \sum_{r=0}^i n_i^{j, k}(0) {{j-r}\choose{i-r}}(1-y)^{i-r}, \  y=(1-\tau/\lambda),$$ which gives
$$n_i^{j, k}(\tau) = P(j,k) (1-\alpha) {j\choose i} y^{j-i} (1-y)^i .$$
We have
\begin{eqnarray*}
(f^{j, k})'(y) &=& - \lambda (f^{j, k})'(\tau) \\
&=& - \lambda  \frac{(j-\Omega+1)n_{\Omega-1}^{j, k}}{\lambda y} \\
&=& - (j-\Omega+1) P(j,k) (1-\alpha) {j\choose \Omega-1} y^{j-\Omega} (1-y)^{\Omega-1} \\
&=& - P(j,k) (1-\alpha) j\PP(Bin(j-1,1-y) = \Omega-1).
\end{eqnarray*}
Then using the fact that
$$ \frac{\partial}{\partial p} \mathbb{P}(Bin(N, p) > K) = N \mathbb{P}(Bin(N-1, p) = K), $$
and by initial condition we have
$$f^{j, k} =  P(j,k) \left[ \alpha + (1-\alpha) \PP(Bin(j,1-y)\geq \Omega) \right] .$$
\end{proof}

Let us define
\begin{eqnarray}
f_{out}(\tau) &:=&  \sum_{k,j} k f^{j, k}(\tau) - \tau \label{eq-fout}, \mbox {and} \\
f(\tau) &=& \sum_{k,j} k f^{j, k}(\tau).
\end{eqnarray}
\begin{lemma}\label{lem-converge}
Assume $\sigma=\sigma(n)$ be the same as Lemma \ref{lem-DE}. For $t \leq n \sigma$, we have
\begin{eqnarray}
\left| F_{out}(t)/n - f_{out}(t/n) \right| \stackrel{p}{\rightarrow} 0 , \label{eq:cvg-out}
\end{eqnarray}
and
\begin{eqnarray}
\left| F(t)/n - f(t/n) \right| \stackrel{p}{\rightarrow} 0 .
\end{eqnarray}
\end{lemma}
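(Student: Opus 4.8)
The plan is to deduce both limits from Lemma~\ref{lem-DE} by controlling the passage from the per-degree-class estimates to the (infinite) degree sums. Since $F_{out}(t) = \sum_{j,k} k F^{j,k}(t) - t$ and $f_{out}(\tau) = \sum_{j,k} k f^{j,k}(\tau) - \tau$, the linear terms cancel, and it suffices to bound $\sup_{t\le n\sigma}\big|\sum_{j,k} k\,(F^{j,k}(t)/n - f^{j,k}(t/n))\big|$, and likewise, with the weight $k$ replaced by $1$, the corresponding quantity for $F(t)$ and $f(\tau)$. For a fixed $\delta>0$ I would split each such sum over degree classes into a finite part with $j,k\le K$ and a tail with $\max(j,k)>K$, apply Lemma~\ref{lem-DE} to the finitely many classes in the finite part, and use Condition~\ref{cond} to make the tails (both on the process side and on the limit side) smaller than $\delta$ uniformly in $n$ once $K$ is chosen large.

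On the finite part there are at most $(K+1)^2$ degree classes, and for each one Lemma~\ref{lem-DE} gives $\sup_{t\le n\sigma}|F^{j,k}(t)/n - f^{j,k}(t/n)| = o(1)$ w.h.p.; since the weight $k\le K$ is bounded, the finite part then contributes $o(1)$ uniformly in $t\le n\sigma$, w.h.p. For the tail on the limit side, Lemma~\ref{lem-sol} gives $0 \le f^{j,k}(\tau) = P(j,k)\big[\alpha + (1-\alpha)\PP(Bin(j,1-y)\ge\Omega)\big] \le P(j,k)$ for every $\tau\in[0,\sigma]$ (recall $\sigma<\lambda$, so $y=1-\tau/\lambda\in(0,1]$), hence this tail is at most $\sum_{\max(j,k)>K} kP(j,k)$, which tends to $0$ as $K\to\infty$ by the finite-expectation property, part (2) of Condition~\ref{cond}. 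On the process side, the number of fired neurons in any degree class never exceeds the total number of neurons in that class, so
$$\sum_{\max(j,k)>K} k\,F^{j,k}(t)/n \ \le\ \frac1n\sum_{i:\, d^+_{n,i}>K\ \text{or}\ d^-_{n,i}>K} d^-_{n,i}$$
for every $t$, uniformly, and the right-hand side can be made $<\delta$ uniformly in $n$ by taking $K$ large: parts (1) and (3) of Condition~\ref{cond} make the empirical joint degree distribution converge weakly to $P$ while its first moments converge to the finite limit $\lambda$, which forces uniform integrability of the degree sequences $\{d^+_{n,i}\}$ and $\{d^-_{n,i}\}$ (a standard consequence of weak convergence together with convergence of first moments). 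Combining the finite and tail parts, $\sup_{t\le n\sigma}|F_{out}(t)/n - f_{out}(t/n)| \le o(1) + 2\delta$ w.h.p., and letting $\delta\downarrow 0$ gives~(\ref{eq:cvg-out}).

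The same truncation argument, with the weight $k$ replaced by $1$, yields the stated limit for $F(t)$; there the process-side tail is bounded by $n^{-1}\#\{i : d^+_{n,i}>K \text{ or } d^-_{n,i}>K\}$, whose limit is $\sum_{\max(j,k)>K} P(j,k)\to 0$, so only the weak convergence in part (1) of Condition~\ref{cond} (tightness of $P$) is needed and uniform integrability can be dispensed with in that case. I expect the only genuinely delicate point to be exactly this interchange of the $n\to\infty$ limit with an \emph{unbounded} sum: Lemma~\ref{lem-DE} controls only finitely many degree classes at a time (indeed $b(n)\to\infty$), so the real work is in ruling out mass escaping to large degrees uniformly in both $n$ and $t\le n\sigma$, which is precisely where the combination of weak convergence and convergence of first moments in Condition~\ref{cond} enters. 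Everything else is routine bookkeeping.
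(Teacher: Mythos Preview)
Your proposal is correct and follows essentially the same truncation argument as the paper: split the degree sum at a threshold $K$, control the finite part via Lemma~\ref{lem-DE}, bound the limit-side tail using $f^{j,k}\le P(j,k)$ from Lemma~\ref{lem-sol}, and bound the process-side tail via $F^{j,k}(t)\le N(j,k)$ together with Condition~\ref{cond}(1),(3). The only cosmetic difference is that you invoke uniform integrability by name, whereas the paper obtains the same tail control by subtracting the finite partial sum from the total mean.
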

\begin{proof}
By definition, we have
\begin{eqnarray*}
\left| F_{out}(t)/n - f_{out}(t/n) \right| &=& \left| \sum_{j,k}k \left(F^{j,k}(t)/n-f^{j, k}(t/n)\right) \right| \\
&\leq& \sum_{j,k} k \left| F^{j,k}(t)/n-f^{j, k}(t/n) \right| ,
\end{eqnarray*}
and
\begin{eqnarray*}
\left| F(t)/n - f(t/n) \right| &\leq& \sum_{j,k} \left| F^{j,k}(t)/n-f^{j, k}(t/n) \right| \\
&\leq& \sum_{j,k} k \left| F^{j,k}(t)/n-f^{j, k}(t/n) \right| .
\end{eqnarray*}
It remains to show $$\sum_{j,k}k \left|F^{j,k}(t)/n-f^{j, k}(t/n)\right| \stackrel{p}{\rightarrow} 0 .$$
By Lemma \ref{lem-DE}, for each $j$ and $k$, we have $\left|F^{j,k}(t)/n-f^{j, k}(t/n)\right| \stackrel{p}{\rightarrow} 0$. Hence, the same holds for any finite partial sum, which is for each $K\in \NN$, we have $$\sum_{j,k<K} j \left|F^{j,k}(t)/n-f^{j, k}(t/n)\right| \stackrel{p}{\rightarrow} 0  .$$ By Condition \ref{cond}, $\sum_{j,k} kP(j,k) \rightarrow \lambda \in (0,\infty)$. Then, there exist a constant $K$, such that $\sum_{j, k\geq K} kP(j,k) < \epsilon $. By Lemma \ref{lem-sol}
$$f^{j, k}(\tau) =  P(j,k) \left[ \alpha + (1-\alpha) \PP(Bin(j,1-y)\geq \Omega) \right] \leq P(j,k) .$$ Let $N(j,k)$ denote the number of vertices with in-degree $j$ and out-degree $k$ at time $0$. Again, by Condition \ref{cond}, $\sum_{j,k} k N(j,k)/n \rightarrow k P(j,k)$. Hence, also $\sum_{j,k\geq K} kN(j,k)/n \rightarrow \sum_{j,k\geq K} kP(j,k) < \epsilon$. Therefore, if $n$ is large enough, $\sum_{j, k\geq K} k N(j,k)/n < \epsilon$, and
\begin{eqnarray*}
\sum_{j, k \geq K} k \left|F^{j,k}(t)/n-f^{j, k}(t/n)\right| &\leq&  \sum_{j, k \geq K} k  \left( F^{j, k}(t)/n + f^{j, k} (t/n) \right) \\ &\leq&  \sum_{j, k \geq K} k \left(N(j,k)/n + P(j,k) \right) < 2\epsilon . \end{eqnarray*}
This completes the proof.
\end{proof}

We now return to the proof of Theorem \ref{thm-main}. By Lemma \ref{lem-sol} and by equation (\ref{eq-fout}), we have
\begin{eqnarray*}
f_{out}(\tau)  &=& \sum_{k,j} k P(j,k) \left[ \alpha + (1-\alpha) \PP(Bin(j,1-y)\geq \Omega) \right] - \tau \\
&=& \lambda \alpha  +  \sum_{k,j} k P(j,k) (1-\alpha) \PP(Bin(j,1-y)\geq \Omega) - \tau \\
&=&  \lambda y - \lambda(1-\alpha) + (1-\alpha) \\ && \mathbb{E}\left[ D_{out} \ind\left(Bin(D_{in},1-y)\geq \Omega\right)\right] \\ &=& f_{\alpha}(y),
\end{eqnarray*}
where $y=(1-\tau/\lambda)$.

\noindent First assume  $f_{\alpha}(y) > 0$ for all $y \in (0,1]$, i.e., $y^*=0$. This implies $D_{in} \geq \Omega$ almost surely. Then we have $\sum_{j,k} k f^{j,k} - \tau > 0$ in $D(\epsilon)$. So the boundary reached is determined by $\hat{\tau}=\lambda - \epsilon$ which is $\hat{y}=\epsilon/\lambda$. Then it is easy to see $f_{out}(\hat{\tau})=O(\epsilon)$  and Lemma \ref{lem-converge} implies $|A_f|=n-O(n\epsilon)$. This proves the first part of the theorem.\\

\noindent Now consider $y^*>0$, and further $y^*$ is not a local minimum point of $f_{\alpha}(y)$. Then $f_{\alpha}(y)<0$
for some interval $(y^*-\epsilon,y^*)$. Then the first constraint is violated at time $\hat{\tau}=\lambda(1- y^*)$.
We apply Corollary \ref{cor-eqdif2} with $\hat{D}$ the domain $D(\epsilon)$ defined above, and the
domain $D$ replaced by $D'(\epsilon)$, which is the same as $D$ except that the last constraint
is omitted:
\begin{align*}
D'(\epsilon)=\{\left(\tau, \{n_i^{j,k}\}_{i< \Omega, j,k} , \{f^{j,k}\}_{j,k}  \right) \in \RR^{b(n)+1} \ : \ & \ -\epsilon < n_i^{j,k} < \lambda , -\epsilon < \tau < \lambda - \epsilon , \\ \ & \ -\epsilon < f^{j,k} < \lambda \:\:\:\:\:  \}.
\end{align*}

This gives us the convergence in equations (\ref{eq:diffmethod}) upto the point where the solution leaves
$D'(\epsilon)$ or when $F_{out}(t)>0$ is violated. Since $f_{out}(\tau)$ begins to go negative after $\hat{\tau}$,
from equation (\ref{eq:cvg-out}) it follows that $F_{out}(t)>0$ must be violated a.a.s. and it becomes zero at some $T_f \sim \hat{\tau} n$.
Then by Lemma \ref{lem-sol} and Lemma \ref{lem-converge} we conclude
\begin{eqnarray*}
F(T_f) &=&  \sum_{k,j} F^{j, k}(T_f) \\ &=& n \left( 1 - (1-\alpha)\mathbb{E}\left[\ind\left(Bin(D_{in},1-y^*)<\Omega \right)\right] \right)  + o_p(n).
\end{eqnarray*}
This completes the proof.

\section*{Acknowledgements} I would like to thank Fran\c{c}ois Baccelli, Marc Lelarge, Andreea Minca and Tsvi Tlusty for helpful comments and discussions.

\bibliographystyle{plain}
\footnotesize
\bibliography{neural}

\begin{thebibliography}{10}

\bibitem{AdL03}
Joan Adler and Uri Lev.
\newblock Bootstrap percolation: visualizations and applications.
\newblock {\em Brazilian Journal of Physics}, 33(3):641--644, 2003.

\bibitem{amdrle}
H.~Amini, M.~Draief, and M.~Lelarge.
\newblock Marketing in a random network.
\newblock In {\em Proceedings of NetCoop08, LNCS 5425}, pages 17--25, 2009.

\bibitem{balpit07}
J{\'o}zsef Balogh and Boris~G. Pittel.
\newblock Bootstrap percolation on the random regular graph.
\newblock {\em Random Structures \& Algorithms}, 30(1-2):257--286, 2007.

\bibitem{bollobas}
B.~Bollob\'as.
\newblock {\em Random Graphs}.
\newblock Cambridge University Press, 2001.

\bibitem{brsomotl06}
I.~Breskin, J.~Soriano, E.~Moses, and T.~Tlusty.
\newblock Percolation in living neural networks.
\newblock {\em Physical Review Letters}, 97(18), 2006.

\bibitem{CainWorm05}
Julie Cain and Nicholas Wormald.
\newblock Encores on cores.
\newblock {\em Electronic Journal of Combinatorics 13 (2006), RP 81}.

\bibitem{Coh09}
O.~Cohen, A.~Kesselman, M.~R. Martinez, J.~Soriano, E.~Moses, and T.~Tlusty.
\newblock Quorum percolation: More is different in living neural networks.
\newblock {\em submitted}.

\bibitem{coopfri04}
Colin Cooper and Alan~M. Frieze.
\newblock The size of the largest strongly connected component of a random
  digraph with a given degree sequence.
\newblock {\em Combinatorics, Probability {\&} Computing}, 13(3):319--337,
  2004.

\bibitem{Eck07}
J.-P. Eckmann, O.~Feinerman, L.~Gruendlinger, E.~Moses, J.~Soriano, and
  T.~Tlusty.
\newblock The physics of living neural networks.
\newblock {\em Physics Reports}, 449:54--76, 2007.

\bibitem{goltsev-2006}
A.~V. Goltsev, S.~N. Dorogovtsev, and J.~F.~F. Mendes.
\newblock k-core (bootstrap) percolation on complex networks: Critical
  phenomena and nonlocal effects.
\newblock {\em Physical Review E}, 73:056101, 2006.

\bibitem{janson08}
Svante Janson.
\newblock On percolation in random graphs with given vertex degrees.
\newblock {\em Electronic Journal of Probability}, 14:86--118, 2009.

\bibitem{janson06}
Svante Janson.
\newblock The probability that a random multigraph is simple.
\newblock {\em Combinatorics, Probability and Computing}, 18(1-2):205--225,
  2009.

\bibitem{molloyreed95}
Michael Molloy and Bruce Reed.
\newblock A critical point for random graphs with a given degree sequence.
\newblock {\em Random Structures \& Algorithms}, 6:161--179, 1995.

\bibitem{Molloy98thesize}
Michael Molloy and Bruce Reed.
\newblock The size of the giant component of a random graph with a given degree
  sequence.
\newblock {\em Combinatorics, Probability and Computing}, 7:295--305, 1998.

\bibitem{newstrowat}
M.~E.~J. Newman, S.~H. Strogatz, and D.~J. Watts.
\newblock Random graphs with arbitrary degree distributions and their
  applications.
\newblock {\em Physical Review E}, 64:026118, 2001.

\bibitem{schwab-2008}
David~J. Schwab, Robijn~F. Bruinsma, and Alex~J. Levine.
\newblock Rhythmogenic neuronal networks, pacemakers, and k-cores.
\newblock {\em Physical Review Letters}, 2008.

\bibitem{Sor08}
Jordi Soriano, Mar\'{i}a Rodr\'{i}guez~Mart\'{i}nez, Tsvi Tlusty, and Elisha
  Moses.
\newblock Development of input connections in neural cultures.
\newblock {\em Proceedings of the National Academy of Sciences of the United
  States of America}, 2008.

\bibitem{Tlusty09}
T.~Tlusty and J.~P. Eckmann.
\newblock Remarks on bootstrap percolation in metric networks.
\newblock {\em Journal of Physics A: Mathematical and Theoretical}, 42:205004,
  2009.

\bibitem{watts}
Duncan~J. Watts.
\newblock A simple model of global cascades on random networks.
\newblock {\em Proceedings of the National Academy of Sciences},
  99(9):5766--5771, April 2002.

\bibitem{Worm95}
Nicholas Wormald.
\newblock Differential equations for random processes and random graphs.
\newblock {\em Annals of Applied Probability}, 5(4):1217--1235, 1995.

\bibitem{Worm99}
Nicholas Wormald.
\newblock The differential equation method for random graph processes and
  greedy algorithms.
\newblock In {\em Lectures on Approximation and Randomized Algorithms}, 1999.

\end{thebibliography}

\end{document}